\newtheorem{theorem}{Theorem}[section]
\newtheorem{proposition}[theorem]{Proposition}
\theoremstyle{definition}
\newtheorem{definition}[theorem]{Definition}
\newtheorem{example}[theorem]{Example}
\newtheorem{remark}[theorem]{Remark}
\numberwithin{equation}{section}
\newcommand{\xad}{x_\alpha^\delta}
\newcommand{\bettertextcircled}[1]{\raisebox{0.9pt}{\textcircled{\raisebox{-0.9pt}{#1}}}}
\begin{document}
\setcounter{page}{1}

\vspace*{2.0cm}
\title[New aspects of ill-posedness classification in Banach spaces]
{New aspects of ill-posedness classification in Banach spaces}
\author[J. Flemming, B. Hofmann]{Jens Flemming$^1$, Bernd Hofmann$^2$}
\date{November 6, 2025}
\maketitle
\vspace*{-0.6cm}

\begin{center}
{\footnotesize

$^1$Faculty of Physical Engineering/Computer Sciences,\\
Zwickau University of Applied Sciences, 08056 Zwickau, Germany\\
$^2$Faculty of Mathematics, Chemnitz University of Technology, 09107 Chemnitz, Germany
}\end{center}

\vskip 4mm {\footnotesize \noindent {\bf Abstract.}
Motivated by a seminal paper of professor M.~Z. Nashed published in 1987 on classification of ill-posed linear operator equations and distinguishing two types of ill-posedness in Banach and Hilbert spaces, we present, illustrate and justify a new classification scheme in this context. This scheme classifies bounded linear operators mapping between infinite-dimensional Banach spaces
with respect to ill-posedness types, including non-injective operators that may have uncomplemented null-spaces. The hybrid case of strictly singular operators the range of which contains a closed infinite-dimensional subspace plays a prominent role there. By a series of new theorems we complement moreover the theory of $\ell^1$-regularization with respect to ill-posedness phenomena and shed some light on the role of weak*-to-weak continuity in the context of $\ell^1$-regularization for operators with uncomplemented null-space.

 \noindent {\bf Keywords.}
Ill-posed linear operator equations; Banach spaces; Classification; Ill-posedness types; Uncomplemented null-spaces; Hybrid case; Mazur-type operators; $\ell^1$-regularization; Weak$^*$-to-weak continuity.

 \noindent {\bf 2020 Mathematics Subject Classification.}
47A52, 47B01. }

\renewcommand{\thefootnote}{}
\footnotetext{
E-mail addresses: jens.flemming@htw-dresden.de (J. Flemming) and hofmannb@mathematik.tu-chemnitz.de (B. Hofmann).
}

\section{Introduction} \label{sec:introduction}

Let $X$ and $Y$ be \emph{infinite-dimensional Banach spaces} and let $A: X \to Y$ denote a \emph{bounded linear operator} mapping between $X$ and $Y$.
Our focus is on the associated linear operator equation
\begin{equation}\label{eq:opeq}
A\, x =y  \quad (x \in X,\; y \in Y)\,,
\end{equation}
assuming that this equation has to be considered to be \emph{ill-posed} is some well-defined sense.

In the seminal paper \cite{Nashed86}, M.~Z.~Nashed introduced a new approach to \emph{classification} and \emph{distinguishing type~I and type~II of ill-posedness} for such equations \eqref{eq:opeq} and thus of the corresonding operators $A$. For the article \cite{Nashed86} and a wide range of publications on linear problems in abstract function spaces since then, ill-posedness has been characterized by a \emph{non-closed range} $\mathcal{R}(A)$ of the operator $A$, whereas a closed range of $A$ indicates well-posedness. The well-posedness in case of a closed range is motivated by the fact that a closed range implies \emph{stability} in the sense of Ivanov \cite{Ivanov63} also for non-injective $A$. This means, for the exact right-hand side $y \in \mathcal{R}(A)=\overline{\mathcal{R}(A)}^Y$, that approximations $y_n \in \mathcal{R}(A)$ with
$\lim \limits_{n \to \infty} \|y_n-y\|_Y=0$ imply the convergence of the quasi-distance ${\rm qdist}(A^{-1}(y_n),A^{-1}(y))$ (cf.~\cite[Def.~1]{HofPla18}) to zero as $n \to \infty$
(see, e.g.,~\cite[Prop.~1.12]{Flemmingbuch18}).

Even though the concept of complementedness of null-spaces $\mathcal{N}(A)$ of the operator $A$ already appeared around ill-posedness types by Nashed in \cite[Thm.~4.5]{Nashed86}, the detailed influence of \emph{complemented} or \emph{uncomplemented} null-space $\mathcal{N}(A)$ on the characterization of ill-posedness was seriously addressed for the first time in the monograph \cite{Flemmingbuch18} of the first author. The simplification that \eqref{eq:opeq} is well-posed if the range is closed in $Y$ and ill-posed if  $\mathcal{R}(A)\not=\overline{\mathcal{R}(A)}^Y$ is fully consistent if $X$ and $Y$ are both \emph{Hilbert spaces} and also in the Banach space case if $A$ is \emph{injective}. In this context, we refer for details to our earlier article \cite{FHV15}, and in particular to Figure~1 therein.

If, however, $A$ is a non-injective linear operator in general Banach spaces, uncomplemented null-spaces $\mathcal{N}(A)$ in $X$ may occur. For uncomplemented null-spaces it is clear that
a closed range implies stability in the sense of Ivanov, but it is not clear whether that stability fails if the range is not closed. But much more important for uncomplemented null-spaces $\mathcal{N}(A)$ is that occurring pseudoinverses of $A$ are always unbounded (see for consequences Proposition~\ref{pro:pseudoinverse} below), and we remember in this context that we know unboundedness as an ill-posedness criterion with respect to the  Moore-Penrose pseudoinverse in the Hilbert space setting.
Taking into account the interplay of uncomplemented and complemented null-spaces, the characterization and classification of ill-posedness for both injective and non-injective operators $A$ in Banach spaces was intensively discussed in a recent paper of the second author with S.~Kindermann \cite{HofKin25}, and we refer for a comparision with our following studies preferably to Section~4 ibid with the Definition~4.1 and its illustration by Figure~3.

In \cite{HofKin25}, the occurrence of the so-called \emph{hybrid case} was discovered, i.e.~the existence of hybrid-type operators $A$ in the sense of Definition~\ref{def:hybrid} below, where the range $\mathcal{R}(A)$ of a strictly singular operator contains a closed infinite-dimensional subspace. This is only possible if the null-space $\mathcal{N}(A)$ is uncomplemented and $A$ is not compact (see Proposition~\ref{pro:hybridprop} below). Typical representatives for hybrid-type operators are bounded linear operators $A=A_{\mathrm{Maz}}$ mapping the Banach space $X$ \emph{onto} a separable Banach space $Y$ that we will call \emph{Mazur-type operators}, see Example~\ref{ex:1} for details. The hybrid case and Mazur-type operators are of particular interest to our paper.

On the one hand, we will suggest a partially new Definition~\ref{def:new} for well- and ill-posedness characterization and classification as an alternative to \cite[Def.~4.1]{HofKin25}, where as main point in contrast to the former definition the ill-posedness type of hybrid cases switches  from type~II to type~I such that now all ill-posed operators $A$ containing a closed infinite-dimensional subspace are uniformly of type~I. This new definition collects as ill-posed of type~II all equations \eqref{eq:opeq} (area to the right of the vertical line in Figure~\ref{fig:update} below) that cannot be saved in the sense that partial overcoming of ill-posedness through domain restriction in the inversion process or finding continuous nonlinear pseudoinverses (possible in one way or another for type~I ill-posendess) completely fails for type~II.

On the other hand, we will give examples that equations with hybrid-type operators $A$ should not be considered and handled as well-posed problems, although  $\mathcal{R}(A)$ may even be closed.
These reasons are closely releated to deficits of the method of $\ell^1$-regularization for operators of the form $A: \ell^1 \to Y$ in the case of ill-posedness of type~I.
The new Theorem~\ref{thm:newl1} below will show the failure of the weak*-weak continuity of such operators. This kind of continuity of $A$, however, is an important sufficient condition for existence, stability and convergence of
$\ell^1$-regularized solutions (see Proposition~\ref{pro:exist} below). For the specific example of a Mazur-type operator that maps $\ell^1$ onto $\ell^2$, the failure of the $\ell^1$-regularization can even be outlined in an explicit manner.

The paper is organized as follows: In Section~\ref{sec:classification} we present, illustrated by a figure as well as by examples, and justify our new classification scheme with shifted hybrid case. In this context, we introduce Mazur-type operators. These operators are of hybrid-type and play a prominent role in the examples and in Section~\ref{sec:ell1}, in which we discuss phenomena of $\ell^1$-regularization and ill-posedness. Section~\ref{sec:ell1} presents and proves a proposition and three new theorems around the weak$^*$-to-weak continuity of operators in $\ell^1$, its failure for operators ill-posed of type~I and the associated consequences.

\section{A new facet for ill-posedness classification in Banach spaces} \label{sec:classification}

\subsection{Preliminaries} \label{sec:prel}

Since the null-space $\mathcal{N}(A)$ of the bounded linear operator $A$ is a closed linear subspace of $X$, we always find another subspace $U$ in $X$ such that
$$X=\mathcal{N}(A)\oplus U \quad \mbox{is a direct sum of} \;\; \mathcal{N}(A)\;\mbox{and}\;\,U,  $$
but such algebraic complements $U$ of $\mathcal{N}(A)$ need not be unique. If, in particular, there exists a \emph{closed} complement $U$ of $\mathcal{N}(A)$, then the null-space is called topologically complemented, or simply \emph{complemented} in $X$, otherwise \emph{uncomplemented} in $X$.
Banach spaces $X$, which are not isomorphic  to a Hilbert space, always contain uncomplemented subspaces (see, e.g.,~\cite{LiTz77}). Consequently, for such Banach spaces $X$,
uncomplemented null-spaces $\mathcal{N}(A)$ in $X$ may occur for bounded linear operators $A: X \to Y$.
In separable Banach spaces each closed subspace, including the uncomplemented ones, is the null-space of some bounded linear operator with $X=Y$, cf.\ \cite[Prop.~2.1]{LauWhi20}. The quotient map $A:X\rightarrow X/U$ corresponding to an uncomplemented subspace $U$ is another example of a bounded linear operators with uncomplemented null-space.
Along the lines of discussions in \cite[Section~1]{Flemmingbuch18} we mention that, for $X$ with a direct sum as $X=\mathcal{N}(A)\,\oplus \, U$, the restriction
$A|_U: U \to \mathcal{R}(A) \subset Y$ of $A$ is a bijective mapping, and we denote its well-defined inverse
$A^\dagger_U: \mathcal{R}(A)  \to U$ as \emph{pseudoinverse} of $A$ with respect to $U$.

\begin{proposition} \label{pro:pseudoinverse}
If for the bounded linear operator $A:X \to Y$ from equation \eqref{eq:opeq} the null-space $\mathcal{N}(A)$ is \emph{complemented} in $X$ with the closed infinite-dimensional subspace $U$ of the Banach space $X$ such that the direct sum $X=\mathcal{N}(A)\oplus U$ takes place, then the pseudoinverse $A^\dagger_U: \mathcal{R}(A)  \to U$ is a \emph{bounded} linear operator if and only if the range $\mathcal{R}(A)$ is closed. If, however,
$\mathcal{N}(A)$ is \emph{uncomplemented} in $X$ and hence every infinite-dimensional subspace $U$ with $X=\mathcal{N}(A)\oplus U$ is not closed, then
the pseudoinverse $A^\dagger_U: \mathcal{R}(A)  \to U$ is always \emph{unbounded}.
\end{proposition}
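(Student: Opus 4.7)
The plan is to treat the two assertions separately, in both cases reducing the problem to two standard facts: the Banach bounded inverse theorem, and the elementary observation that a bounded linear projection on a Banach space has closed image and closed kernel.

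For the complemented case, note first that the restriction $A|_U\colon U \to \mathcal{R}(A)$ is a bounded linear bijection. If $\mathcal{R}(A)$ is closed in $Y$, then both $U$ (being closed in $X$) and $\mathcal{R}(A)$ are Banach spaces, and the bounded inverse theorem immediately yields boundedness of $A^\dagger_U = (A|_U)^{-1}$. For the converse, suppose $A^\dagger_U$ is bounded. Then $A|_U$ is a topological isomorphism between the Banach space $U$ and the normed space $\mathcal{R}(A)$ carrying the norm of $Y$; since completeness is preserved under homeomorphism, $\mathcal{R}(A)$ is complete and hence closed in $Y$.

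For the uncomplemented case, the key observation is that the composition $P_U := A^\dagger_U \circ A$, viewed as a map $X \to X$ via the natural inclusion $U \hookrightarrow X$, coincides with the purely algebraic projection of $X$ onto $U$ along $\mathcal{N}(A)$. Indeed, any $x \in X$ admits a unique decomposition $x = n + u$ with $n \in \mathcal{N}(A)$ and $u \in U$, and the defining property of $A^\dagger_U$ gives $A^\dagger_U(Ax) = A^\dagger_U(Au) = u$. I would then argue by contradiction: if $A^\dagger_U\colon \mathcal{R}(A) \to U$ were bounded, then $P_U\colon X \to X$ would be a bounded linear projection, so its image would satisfy $U = \ker(I - P_U)$, and hence $U$ would be closed in $X$. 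This contradicts the hypothesis that $\mathcal{N}(A)$ is uncomplemented.

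The only subtle point, and what one must state carefully, is the norm on $U$ used in defining boundedness of $A^\dagger_U$: one has to view $U$ with the norm inherited from $X$, so that composition with the inclusion $U \hookrightarrow X$ is automatic and $P_U$ inherits continuity. Once this is made explicit, neither direction poses a genuine obstacle: the first part is a direct application of the bounded inverse theorem, and the second part rests on the standard characterisation of the range of a continuous projection as the kernel of a continuous operator.
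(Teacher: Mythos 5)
Your proof is correct. The paper itself does not write out an argument: it disposes of the complemented case with the words ``a consequence of the open mapping theorem'' and refers the uncomplemented case to \cite[Prop.~1.11]{Flemmingbuch18}, so your proposal supplies details the paper omits. For the complemented case you follow exactly the route the paper indicates (the bounded inverse theorem applied to the bijection $A|_U\colon U\to\mathcal{R}(A)$ between Banach spaces, plus the observation that a linear topological isomorphism transports completeness back to $\mathcal{R}(A)$ for the converse). For the uncomplemented case your argument --- boundedness of $A^\dagger_U$ would make $P_U=A^\dagger_U\circ A$ a bounded projection onto $U$ along $\mathcal{N}(A)$, forcing $U=\ker(I-P_U)$ to be closed and hence $\mathcal{N}(A)$ to be complemented --- is the standard one and is what the cited reference proves; your explicit remark that $U$ must carry the norm inherited from $X$ is exactly the point that makes the composition legitimate. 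One phrasing to tighten: completeness is not preserved under arbitrary homeomorphisms (the open interval and the real line are homeomorphic); what you are using is that a bounded linear bijection with bounded inverse is bi-Lipschitz, hence a uniform homeomorphism, and uniform homeomorphisms preserve completeness. With that wording fixed, the proof stands as a self-contained replacement for the paper's citations.
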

\begin{proof}
For complemented null-spaces, the assertion of the proposition is a consequence of the open mapping theorem, whereas the assertion for uncomplemented null-spaces can be found with proof in \cite[Prop.~1.11]{Flemmingbuch18}.
\end{proof}

For the classification of well-posed and ill-posed equations \eqref{eq:opeq}, it plays a prominent role on the one hand whether the associated bounded linear operators $A$
have a finite-dimensional or infinite-dimensional range $\mathcal{R}(A)$ and on the other hand if the operators are strictly singular or not and if the operators are compact or not.
We recall that $A:X \to Y$ is called \emph{strictly singular} if, given any closed infinite-dimensional subspace $Z$ of $X$, $A$ restricted to $Z$ is not an isomorphism (i.e., linear homeomorphism), which means that closed subspaces $Z$ of $X$, for which the restriction $A|_{Z}$ has a bounded inverse, are necessarily finite dimensional. We also note that
all compact operators are strictly singular and that the following proposition taken from \cite[Lemma~1.4]{HofKin25} is valid.

\begin{proposition} \label{pro:lemmaold}
If the range $\mathcal{R}(A)$ of the bounded linear operator $A: X \to Y$ mapping between infinite-dimensional Banach spaces $X$ and $Y$ is finite-dimensional, then $A$ is strictly singular and compact with closed range $\mathcal{R}(A)$. Conversely, let $A$  be strictly singular and possess a closed range $\mathcal{R}(A)$. Then $\mathcal{R}(A)$ is finite-dimensional and $A$ is also compact whenever either
\begin{itemize}
\item $X$ and $Y$ are Hilbert spaces
\item or $A$ is an injective mapping between Banach spaces $X$ and $Y$.
\end{itemize}
\end{proposition}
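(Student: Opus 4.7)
The plan is to treat the two implications separately, with the open mapping theorem serving as the main tool in the nontrivial direction. The forward implication is a quick consequence of two basic facts about finite-dimensional subspaces, while the converse requires one to produce a closed complement of $\mathcal{N}(A)$ inside $X$ and then extract a contradiction with strict singularity.

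For the forward direction, I would observe that if $\mathcal{R}(A)$ is finite-dimensional, then it is automatically closed, and bounded subsets of a finite-dimensional normed space are relatively compact, so $A$ maps the unit ball of $X$ into a relatively compact set and is therefore compact. For strict singularity, any closed infinite-dimensional subspace $Z\subseteq X$ is mapped into the finite-dimensional space $\mathcal{R}(A)$, so the restriction $A|_Z$ must have an infinite-dimensional null-space and hence cannot be injective, let alone an isomorphism.

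For the converse, I would first identify a closed linear complement $U$ of $\mathcal{N}(A)$ in $X$ so that the restriction $A|_U\colon U\to \mathcal{R}(A)$ becomes a bounded linear bijection between Banach spaces, the target being a Banach space since $\mathcal{R}(A)$ is closed. The open mapping theorem then upgrades $A|_U$ to a topological isomorphism, and strict singularity immediately forces $U$ to be finite-dimensional, whence $\mathcal{R}(A)=A(U)$ is finite-dimensional as well. In the Hilbert setting, one simply takes $U=\mathcal{N}(A)^{\perp}$, which is closed. In the injective case, $\mathcal{N}(A)=\{0\}$ and $U=X$ serves trivially as the complement; the argument then forces $X$ itself to be finite-dimensional, contradicting the standing assumption that $X$ is infinite-dimensional. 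This shows that no strictly singular injective operator with closed range can exist between infinite-dimensional Banach spaces, so in this case the claim holds vacuously.

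The main obstacle is precisely the step that asks for a closed complement of $\mathcal{N}(A)$: for a general non-injective $A$ between Banach spaces the null-space may well be uncomplemented, in which case this line of argument breaks down. The two additional hypotheses (Hilbert structure, or injectivity of $A$) are exactly tailored to sidestep this difficulty by making a closed complement available for free. Once $\mathcal{R}(A)$ is shown to be finite-dimensional, compactness of $A$ then follows exactly as in the forward direction.
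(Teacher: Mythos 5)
Your proof is correct; the paper itself gives no proof of this proposition, merely citing \cite[Lemma~1.4]{HofKin25}, and your argument --- forward direction from the elementary facts about finite-dimensional subspaces, converse via the open mapping theorem applied to $A|_U$ for a closed complement $U$ of $\mathcal{N}(A)$, with $U=\mathcal{N}(A)^{\perp}$ in the Hilbert case and $U=X$ in the injective case --- is the standard argument underlying that reference. Your observation that the injective branch is vacuous (no injective strictly singular operator with closed range exists between infinite-dimensional Banach spaces) is accurate and a legitimate way to close that case.
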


The situation becomes more complicated if non-injective operators $A$ and uncomplemented nullspaces $\mathcal{N}(A)$ are taken into account. Then hybrid-type operators many occur, and
along the lines of \cite[Def.~4.6 and Prop.~4.7]{HofKin25} we give the following Definition~\ref{def:hybrid} and Proposition~\ref{pro:hybridprop}.

\begin{definition}[Hybrid-type] \label{def:hybrid}
We characterize the operator equation \eqref{eq:opeq} and its corresponding bounded linear operator $A: X \to Y$  as of \emph{hybrid-type} if $A$ is strictly singular and its range $\mathcal{R}(A)$ contains an infinite-dimensional closed subspace of $Y$.
\end{definition}

\begin{proposition} \label{pro:hybridprop}
For an operator equation \eqref{eq:opeq} of hybrid-type, the operator $A: X \to Y$ is not compact and its null-space $\mathcal{N}(A)$ is always uncomplemented.
\end{proposition}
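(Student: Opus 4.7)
The plan is to prove the two assertions separately, handling non-compactness first and then uncomplementedness by contradiction, each time exploiting the open mapping theorem to convert the existence of a closed infinite-dimensional subspace $Z \subset \mathcal{R}(A)$ into a structural statement that clashes with one of the given hypotheses.

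For the first assertion (non-compactness of $A$), I would argue as follows. Pick a closed infinite-dimensional subspace $Z \subset \mathcal{R}(A) \subset Y$ guaranteed by Definition~\ref{def:hybrid} and consider the preimage $W_0 := A^{-1}(Z)$, which is closed in $X$ because $A$ is continuous and $Z$ is closed. Then the restriction $A_0 := A|_{W_0} \colon W_0 \to Z$ is a bounded surjection between Banach spaces. Assume towards a contradiction that $A$ were compact; then $A_0$ would be compact as well. The open mapping theorem would force $A_0$ to be open, so $A_0$ would map the open unit ball of $W_0$ to a set containing a neighborhood of the origin in $Z$. But that image is also relatively compact by compactness of $A_0$, so $Z$ would have a relatively compact $0$-neighborhood, forcing $\dim Z < \infty$ by Riesz's lemma, contradicting the infinite-dimensionality of $Z$.

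For the second assertion, suppose for contradiction that $\mathcal{N}(A)$ is complemented, i.e.~$X = \mathcal{N}(A) \oplus U$ with $U$ a closed subspace. Choose the same closed infinite-dimensional $Z \subset \mathcal{R}(A)$ and set $W := U \cap A^{-1}(Z)$. As an intersection of two closed subspaces, $W$ is a closed subspace of $X$ and thus a Banach space. The restriction $A|_W \colon W \to Z$ is injective (since $W \subset U$ and $U \cap \mathcal{N}(A)=\{0\}$) and surjective (each $z \in Z \subset \mathcal{R}(A)$ has a unique preimage in $U$, which must then lie in $W$), hence a bounded linear bijection between Banach spaces. The open mapping theorem makes $A|_W$ a linear homeomorphism, so $A$ is an isomorphism on the closed subspace $W$. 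Since $A|_W$ is a bijection onto the infinite-dimensional space $Z$, the space $W$ must be infinite-dimensional, which directly contradicts the strict singularity of $A$.

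Both steps are essentially routine once one recognizes that the open mapping theorem is the bridge between the abstract hypothesis ``closed infinite-dimensional subspace contained in the range'' and the concrete structural consequences (bounded inverse in the second part, openness of the compact map in the first). The main subtlety I foresee is the correct construction of the ambient Banach space on which to apply open mapping, i.e.~verifying that $A^{-1}(Z)$ and $U \cap A^{-1}(Z)$ are closed and yield a genuine bijection onto $Z$; once this bookkeeping is done, the contradictions with compactness and with strict singularity are immediate from the definitions recalled in Section~\ref{sec:prel}.
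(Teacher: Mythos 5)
Your proof is correct and follows essentially the same route as the paper's: both halves hinge on the open mapping theorem applied to the closed infinite-dimensional subspace $Z\subseteq\mathcal{R}(A)$, deriving a contradiction with compactness via Riesz's lemma and with strict singularity via a continuously invertible restriction of $A$ to a closed infinite-dimensional subspace. The only cosmetic differences are that you realize the paper's set $A_U^{\dagger}[M]$ explicitly as $W=U\cap A^{-1}(Z)$ (which makes its closedness immediate, arguably a cleaner bookkeeping than the paper's appeal to the pseudoinverse) and that you obtain non-compactness from local compactness of $Z$ rather than by lifting a bounded Riesz sequence with the quantitative open mapping theorem.
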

\begin{proof}
By Definition~\ref{def:hybrid}, $\mathcal{R}(A)$ contains a closed infinite-dimensional subspace, say $M$. Since $M$ is infinite-dimensional, then by the Riesz lemma there is a bounded sequence $(y_n) \subset M$ which has no convergent subsequence. Even though $A$ is non-injective, the
open mapping theorem in the formulation of \cite[Theorem~3]{Tao09b} still gives a sequence $(x_n)_{n\in\mathbb{N}}$ with $x_n \in A^{-1}(y_n),\;$ $\|x_n\|_X \leq C \|y_n\|_Y$ and
$A x_n = y_n$ for all $n\in\mathbb{N}$. If $A$ was compact, then $(y_n)_{n\in\mathbb{N}}$ would have a convergent subsequence, which is a contradiction.

If the null-space would be complemented and $A$ is not compact,
then $X=\mathcal{N}(A) \oplus U$ is connected with an infinite-dimensional closed subspace $U$ of $X$. This, however, leads to a contradiction. Namely, let $M \subseteq \mathcal{R}(A)$ be an
infinite-dimensional closed subspace of $Y$ and, due to the continuity and injectivity of $A$ on $U$, the set $A_U^{\dagger}[M]$
is an infinite-dimensional closed subspace of $X$ on which $A$ is continuously invertible as a consequence of
Proposition~\ref{pro:pseudoinverse}.
This, however, contradicts the strict singularity of $A$.
\end{proof}

\begin{example}[Mazur-type operators in sequence spaces] \label{ex:1}
Let $X:=\ell^1$ and let $Y$ be a separable Banach space. Further, let $(\zeta^{(k)})_{k\in\mathbb{N}}$ be a countable dense subset of the unit sphere in $Y$ with $\zeta^{(k)}\neq\zeta^{(l)}$ for $k\neq l$. Then, for $x=(x_1,x_2,...) \in \ell^1$,
\begin{equation}\label{eq:mazur}
A_{\mathrm{Maz}}\,x:=\sum_{k=1}^\infty x_k\,\zeta^{(k)}
\end{equation}
defines a bounded linear operator mapping $\ell^1$ \emph{onto} $Y$ (\cite[Theorem~2.3.1 and its proof]{AlbKal06}). If $Y$ is not isomorphic to a subspace of $\ell^1$, then the null-space $\mathcal{N}(A_{\mathrm{Maz}})$ is uncomplemented (\cite[Corollary 2.3.3 and its proof]{AlbKal06}).
Note that choosing different separable Banach spaces $Y$ and different sequences $(\zeta^{(k)})_{k\in\mathbb{N}}$ we obtain many different operators with uncomplemented null-space. We refer to such operators as \emph{Mazur-type} operators, because their existence was already shown in \cite[page~111, item e)]{Mazur33}.
\par
Of particular interest to us are Mazur-type operators $A_{\mathrm{Maz}}:\ell^1\to\ell^q$, $1<q<\infty$, mapping the non-reflexive Banach space $\ell^1$ continuously \emph{onto} the reflexive and separable Banach space $\ell^q$.
Such operators are strictly singular (see \cite[Theorem]{GoldThorp63}), because $\ell^1$ does not contain an infinite-dimensional
reflexive subspace and $\ell^q$ is a reflexive Banach space. Moreover, as a consequence of Proposition~\ref{pro:hybridprop}, such an operator $A_{\mathrm{Maz}}$ is \emph{not compact} and possesses an \emph{uncomplemented null-space}. Our focus is on the special case $q=2$ introduced as example in \cite{GoldThorp63}. In the sequel, we will abbreviate this operator mapping from $\ell^1$ on $\ell^2$ as $B$.
\end{example}

\subsection{Updated classification with switched hybrid case and its illustration} \label{sec:def}

\begin{definition}[Well- and ill-posedness characterization and classification] \label{def:new}
Let $A: X \to Y$ be a bounded linear operator mapping between the
infinite-dimensional Banach spaces $X$ and $Y$.

Then the operator equation \eqref{eq:opeq} is called \emph{well-posed} if
\begin{align*} &\text{the range $\mathcal{R}(A)$ of $A$ is a \emph{closed} subset of $Y$ and, moreover,} \\
&\text{the null-space $\mathcal{N}(A)$ is \emph{complemented} in $X$;}
\end{align*}
otherwise the equation (\ref{eq:opeq}) is called \emph{ill-posed}.

\medskip

In the ill-posed case, (\ref{eq:opeq}) is called \emph{ill-posed of type I}   if
\begin{align*}
&\text{the range $\mathcal{R}(A)$ contains an \emph{infinite-dimensional closed subspace} of $Y$;}
\end{align*}
otherwise the ill-posed
equation (\ref{eq:opeq}) is called \emph{ill-posed of type~II}.
\end{definition}

\begin{figure}[ht]
\begin{center}
\includegraphics{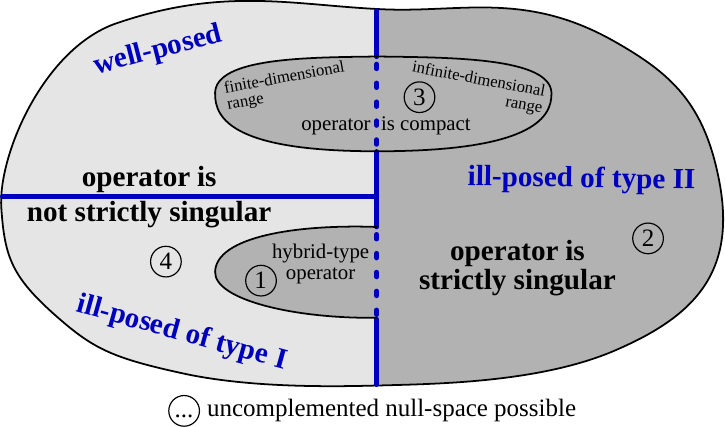}
\caption{Case distinction for bounded linear operators between
infinite-dimensional Banach spaces with complemented and uncomplemented null-spaces
and shifted hybrid case} \label{fig:update}
\end{center}
\end{figure}


\subsection{Discussion of Figure~\ref{fig:update} } \label{sec:discussion}

The figure should help us to illustrate the impact of Definition~\ref{def:new} on the classification of well-posedness and ill-posedness of the linear operator equation \eqref{eq:opeq}
formulated in Banach spaces $X$ and $Y$, where the operator $A: X \to Y$ may be non-injective and may have an uncomplemented null-space $\mathcal{N}(A)$. Bearing in mind the operator
properties described in the preliminaries, we are going to discuss in this subsection the different possible cases with reflection to the areas appearing in the figure.

The area to the right of the vertical line in the figure is devoted to the equations that are ill-posed of type~II in the sense of Definition~\ref{def:new} with strictly singular operators $A$ possessing an infinite-dimensional range $\mathcal{R}(A)$,
which has no closed infinite-dimensional subspace. As we already had mentioned in the introduction, those problems cannot be saved in the sense that partial
overcoming of ill-posedness through domain restriction in the inversion process or finding continuous nonlinear pseudoinverses via the  Bartle--Graves theorem (cf.\ \cite[Theorem~5.5]{HofKin25}) is possible. This is just because their ranges $\mathcal{R}(A)$ do not contain an infinite-dimensional
closed subspace. For a Hilbert space setting, this right area if full of compact operators $A$. In the Banach space setting, however, the compact operators form only a proper subset.
For example, in sequence spaces the injective embedding operators $A:=\mathcal{E}_p^q: \ell^p \to \ell^q\;(1 \le p<q<\infty)$ are strictly singular but not compact, and we refer for related problems also
to the recent paper \cite{PlaHof25}.

The area to the left of the vertical line in the figure is separated by a horizontal line. Above the line one can find the well-posed problems in the sense of Definition~\ref{def:new} and below the line the ill-posed problems of type~I in the sense of that definition.
From the right side, two protrusions (bulgs) grow into this left area. Both express strictly singular operators, whereas otherwise all operators $A$ belonging to the area left of the vertical line are not strictly singular. Strictly singular operators correspond to the areas with dark background in the figure. The upper protrusion collects the well-posed problems with operators $A$ possessing finite-dimensional ranges, whereas the lower protrusion expresses the interesting hybrid case in the sense of Definition~\ref{def:hybrid}. The figure also shows that hybrid-type operators are
well-separated from the compact operators.

It seems to be of interest, in which areas of Figure~\ref{fig:update} operators $A$ with \emph{uncomplemented} null-spaces $\mathcal{N}(A)$ can be found. We have indicated in the figure such areas by the small circled numbers \bettertextcircled{1}, \bettertextcircled{2}, \bettertextcircled{3} and \bettertextcircled{4}. The number \bettertextcircled{1} meets the hybrid case (see also Example~\ref{ex:1} above), and it should be emphasized that \emph{all} operators $A$ of the hybrid case have uncomplemented null-spaces as Proposition~\ref{pro:hybridprop} proves. In the three areas, where \bettertextcircled{2}, \bettertextcircled{3} and \bettertextcircled{4} appear, uncomplemented null-spaces are not typical but they can appear as outlined in Examples~\ref{ex:2}, \ref{ex:3} and \ref{ex:4} below, where the Mazur-type operator $B$ (see Example~\ref{ex:1}) mapping from $\ell^1$ onto $\ell^2$ with null-space uncomplemented in $\ell^1$ is an auxiliary tool for the construction of concrete versions to all three examples. In the area of well-posed problems, uncomplemented null-spaces are excluded by definition. Neither operators $A$ with closed infinite-dimensional range and continuous inverse
$A^{-1}: \mathcal{R}(A)=\overline{\mathcal{R}(A)}^Y \to X$ nor compact operators $A$ with finite-dimensional range may have uncomplemented null-spaces.

\begin{example}[Non-compact compositions to Mazur-type operator $B$] \label{ex:2}
 \bettertextcircled{2} As an example for strictly singular but non-compact operators $A$ with infinite-dimensional range and \emph{uncomplemented null-space} indicated by number (2) we introduce a composition
$A=:C \circ B: \ell^1 \to Z$ of a version of the Mazur-type operator $B: \ell^1 \to \ell^2$, possessing an uncomplemented null-space $\mathcal{N}(B)$, and a bounded non-compact injective operator $C: \ell^2 \to Z$ with a non-closed range $\mathcal{R}(C)$ that does not contain a closed infinite-dimensional subspace. Due to the surjectivity of $B$ and the injectivity of $C$, the range $\mathcal{R}(A)$ of $A$ is then also non-closed and  does not contain a closed infinite-dimensional subspace, moreover the null-space $\mathcal{N}(A)$ is also \emph{uncomplemented}.
A concrete version of such operator $C$ is the strictly singular and non-compact embedding operator $C=\mathcal{E}_2^p: \ell^2 \to Z=\ell^p \;(2<p<\infty)$.
Non-compactness of $C$ yields a bounded sequence $(y_n)_{n\in\mathbb{N}} \subset Y$ such that $(C\,y_n)_{n\in\mathbb{N}} \subset Z$ does not contain a convergent subsequence.
In the definition \eqref{eq:mazur} of $B$ we choose a sequence $(\zeta^{(k)})_{k\in\mathbb{N}}$ that contains all $\tilde{y}_n:=\frac{1}{\|y_n\|_Y}y_n$.
If $(k_n)_{n\in\mathbb{N}}$ is such that $\zeta^{k_n}=\tilde{y}_n$, then we have $x^{(n)}:=\|y_n\|_Y\,e^{(k_n)}$, with $e^{(k)}:=(0,\ldots,0,1,0,\ldots)$ being the usual unit sequences ($1$ at position $k$). This implies $B\,x^{(n)}=y_n$, and thus the image of the bounded sequence $(x^{(n)})_{n\in\mathbb{N}}$ under the composition $A=C\circ B$ does not contain any convergent subsequence. Consequently, $A$ is not compact, but evidently also not of hybrid-type.
\end{example}

\begin{example}[Compact compositions to Mazur-type operator $B$] \label{ex:3}
 \bettertextcircled{3} As an example for compact operators $A$ with infinite-dimensional range and \emph{uncomplemented null-space} indicated by number (3) we can present a composition operator $A:=C \circ B$ for a  compact injective operator $C: \ell^2 \to Y$ and the Mazur-type operator $B: \ell^1 \to \ell^2$ from Example~\ref{ex:1}. Evidently, the composition is \emph{compact}, because $B$ is bounded. Since the null-space of $B$ is uncomplemented in $\ell^1$ and $C$ is injective, the null-space of $A$ is also \emph{uncomplemented}.
\end{example}

\begin{example}[Type~I ill-posedness in product spaces] \label{ex:4}
 \bettertextcircled{4} Here, we consider the product spaces $X:=\ell^1 \times \ell^2, \;Y:=\ell^2 \times \ell^2$ and the bounded linear operator $A:=(B,I)$ with the Mazur-type operator $B: \ell^1 \to \ell^2$ from Example~\ref{ex:1} and the identity operator $I: \ell^2 \to \ell^2$. Then the null-space $\mathcal{N}(A)=\mathcal{N}(B) \times \{0\}$ is \emph{uncomplemented} since $\mathcal{N}(B)$ is,
which means that \eqref{eq:opeq} with such operator $A$ is ill-posed.
Obviously, $A$ is an isomorphism on $\{0\}\times \ell^2$ and therefore not strictly singular, consequently not of hybrid-type. Moreover, $A$ is \emph{ill-posed of type~I}, because the range $\mathcal{R}(A)$  contains the infinite-dimensional closed subspace $M=\{0\}\times \ell^2$.
\end{example}

\begin{remark} \label{rem:1}
As the repeated use of the operator $B$ in the three examples above subtly suggests, the occurrence of bounded linear operators with uncomplemented null-spaces seems to be rare, and their targeted construction appears to be challenging. Now let $X$ be a Banach space with a closed subspace $S$ uncomplemented in $X$ and $U$ a subspace such that the direct sum $X=S \oplus U$ takes place.
One might then consider it a simple construction to define an operator $A: X \to X$ with uncomplemented null-space by the formula
$$
Ax:=\begin{cases}
0,&\text{if }x\in S,\\
I,&\text{if }x\in U,
\end{cases}
$$
with $I:U\to U$ being the identity map on $U$.
This is obviously a linear operator with null-space $\mathcal{N}(A)=S$, but unfortunately such operator is always unbounded. This is because $P:=I-A$ is a bounded projection from $X$ onto $S$ whenever $A$ is a bounded operator, and this is only possible if $S$ is complemented in $X$, see \cite[Cor.~3.2.15]{Meg98}.
\end{remark}

\begin{remark} \label{rem:2}
As also Figure~\ref{fig:update} indicates, the class of linear bounded operators $A$ mapping between Banach spaces that are ill-posed in the sense of Definition~\ref{def:new} is divided into two clearly separated subclasses of type~I and type~II ill-posedness. Surprisingly, the dividing line between these two types can be crossed when observing compositions of two operators of the same type. As an example we consider, for some Banach space $X$ and some strictly singular operator $C: X \to X$ with non-closed range, the two operators $D_1:=(C,I)$ and $D_2:=(I,C)$ both mapping from $X \times X$ into itself, where $I$ is the identity operator in $X$. These operators are ill-posed of type~I, because their ranges $\mathcal{R}(D_1)$ and  $\mathcal{R}(D_2)$ are non-closed and contain closed infinite dimensional subspaces in the Banach space $X \times X$. However, their composition $A:=D_2 \circ D_1=(C,C)$ is strictly singular with non-closed range and therefore ill-posed of type~II.

A similar phenomenon had been observed in \cite{KinHof24} for Hilbert spaces, where also the dividing line between the well-separated compact (type~II) and non-compact (type~I) ill-posed operators can be crossed by composition. Ibid there was analyzed the case that the composition of two non-compact operators can lead to a compact one.
\end{remark}

\section{Phenomena of $\ell^1$-regularization and ill-posedness} \label{sec:ell1}

In this section, we consider the operator equation \eqref{eq:opeq} in the special case of bounded linear operators
\begin{equation} \label{eq:Al1}
A: \ell^1 \to Y\,,
\end{equation}
mapping from $\ell^1$ to the infinite-dimensional Banach space $Y$. In most cases such equations are ill-posed, and
one way to obtain stable approximate solutions to operator equations in the $\ell^1$-case is to minimize the Tikhonov-type functional
\begin{equation} \label{eq:Tikl1}
T_\alpha^\delta(x):= \|Ax-y^\delta\|_Y^p+ \alpha\,\|x\|_{\ell^1}
\end{equation}
over $x \in \ell^1$ with regularization parameter $\alpha>1$ and some appropriate exponent $p>1$. An important sufficient condition for the existence and stability of minimizers $\xad \in \ell^1$
to the functional $T_\alpha^\delta(x)$ is the weak$^*$-to-weak continuity of the operator $A$ from \eqref{eq:Al1} in the sense of Proposition~\ref{pro:exist}, for which we refer to \cite[Theorem~9.4]{Flemmingbuch18}. Here, weak$^*$-to-weak continuity means that $A$ transforms weak$^*$-convergent sequences in $\ell^1$ into weak-convergent sequences in $Y$, where the weak$^*$-convergence in $\ell^1$ is related to the predual space $c_0$ of $\ell^1$.

\begin{proposition} \label{pro:exist}
Let $A:\ell^1 \to Y$ be weak$^*$-to-weak continuous. Then the following assertions are true.
\begin{itemize}
\item[(i)]
Existence: There exist solutions to \eqref{eq:opeq} with minimal norm (referred to as norm minimizing solutions) and there exist minimizers of the Tikhonov-type functional \eqref{eq:Tikl1}.
Further, all minimizers of $T_\alpha^\delta$ are sparse.
\item[(ii)]
Stability: If $(y_k)_{k\in\mathbb{N}}$ converges to $y^\delta$ and if $(x^{(k)})_{k\in\mathbb{N}}$ is a corresponding sequence of minimizers of \eqref{eq:Tikl1} with $y^\delta$ replaced by $y_k$, then this second sequence has a weakly* convergent subsequence and each weakly* convergent subsequence converges weakly* to a minimizer of $T_\alpha^\delta$.
\item[(iii)]
Convergence: If $(\delta_k)_{k\in\mathbb{N}}$ converges to zero and if $(y_k)_{k\in\mathbb{N}}$ satisfies $\|y_k-y^\dagger\|_Y\leq\delta_k$, then there is a sequence $(\alpha_k)_{k\in\mathbb{N}}$ such that each corresponding sequence of minimizers of $T_{\alpha_k}^{\delta_k}$ contains a weakly* convergent subsequence. Each such subsequence converges in norm to some norm minimizing solution of \eqref{eq:opeq}.
\end{itemize}

\end{proposition}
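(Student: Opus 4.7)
The plan is to run the standard Tikhonov regularization arguments in Banach spaces (existence via a minimizing sequence plus lower semicontinuity, stability and convergence via compactness of sublevel sets), replacing weak by weak$^*$ convergence throughout. The three structural ingredients that make this work for $\ell^1$ are: (a) since $\ell^1=(c_0)^*$ with $c_0$ separable, the Banach--Alaoglu theorem guarantees that every bounded sequence in $\ell^1$ has a weak$^*$-convergent subsequence; (b) the $\ell^1$-norm, being a dual norm, is weak$^*$-lower semicontinuous, and the residual functional $x\mapsto\|Ax-y^\delta\|_Y^p$ is weak$^*$-lower semicontinuous as well by the assumed weak$^*$-to-weak continuity of $A$ combined with weak lower semicontinuity of $\|\cdot\|_Y$; (c) sequences in $\ell^1$ that converge weakly$^*$ and whose $\ell^1$-norms converge to the norm of the weak$^*$-limit converge in norm (a Kadec--Klee-type property of $\ell^1$ relative to its weak$^*$ topology, most conveniently derived via the Schur property together with pointwise convergence).

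For (i), I would take a minimizing sequence for $\inf\{\|x\|_{\ell^1}:Ax=y\}$ (where existence of a solution to $Ax=y$ is presupposed, otherwise there is nothing to minimize), extract by (a) a weak$^*$-convergent subsequence, use weak$^*$-to-weak continuity to conclude that the limit still solves $Ax=y$, and apply weak$^*$-lower semicontinuity of $\|\cdot\|_{\ell^1}$ to see the limit is norm minimizing. The same scheme applied to a minimizing sequence of $T_\alpha^\delta$ (bounded in $\ell^1$ because $T_\alpha^\delta(0)=\|y^\delta\|_Y^p$ caps the infimum) yields a minimizer. Sparsity of any minimizer $\bar x$ is then obtained by the usual perturbation-on-the-support argument: if infinitely many components of $\bar x$ were non-zero, one can construct an admissible variant in which a tail is set to zero, decreasing the penalty by a uniformly positive amount while changing the residual only by a summable quantity, contradicting optimality; this is the one part of (i) which is genuinely $\ell^1$-specific and will be the slightly technical step.

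For (ii), the inequality $T_\alpha^{y_k}(x^{(k)})\le T_\alpha^{y_k}(0)=\|y_k\|_Y^p$ shows $(x^{(k)})$ is bounded in $\ell^1$, so (a) gives a weak$^*$-convergent subsequence with some limit $\bar x$; passing to the limit in $T_\alpha^{y_k}(x^{(k)})\le T_\alpha^{y_k}(x)$ for arbitrary $x$ via weak$^*$-to-weak continuity of $A$ and lower semicontinuity arguments identifies $\bar x$ as a minimizer of $T_\alpha^\delta$. For (iii), choose $\alpha_k\to 0$ with $\delta_k^p/\alpha_k\to 0$; comparing $T_{\alpha_k}^{\delta_k}(x^{(k)})$ with $T_{\alpha_k}^{\delta_k}(x^\dagger)$ for a norm-minimizing solution $x^\dagger$ yields both $\|Ax^{(k)}-y_k\|_Y\to 0$ and $\limsup\|x^{(k)}\|_{\ell^1}\le\|x^\dagger\|_{\ell^1}$. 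Extract a weak$^*$-limit $\bar x$; weak$^*$-to-weak continuity gives $A\bar x=y^\dagger$, and weak$^*$-lower semicontinuity together with the preceding $\limsup$ estimate forces $\|\bar x\|_{\ell^1}=\lim\|x^{(k)}\|_{\ell^1}=\|x^\dagger\|_{\ell^1}$, so $\bar x$ is norm minimizing. The main obstacle here is upgrading weak$^*$ to norm convergence: I would invoke property (c), namely that in $\ell^1$ weak$^*$-convergence combined with convergence of the norms implies norm convergence, which is where the Schur property of $\ell^1$ (via componentwise convergence extracted from weak$^*$ convergence) enters decisively.
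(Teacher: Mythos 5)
The paper itself offers no proof of this proposition; it is quoted verbatim from \cite[Theorem~9.4]{Flemmingbuch18}. Your overall variational scheme for (ii), (iii) and the existence claims in (i) --- Banach--Alaoglu in $\ell^1=(c_0)^*$ with $c_0$ separable, weak$^*$ lower semicontinuity of the dual norm, weak$^*$-to-weak continuity of $A$ to pass to the limit in the residual, the chain $\|x^\dagger\|_{\ell^1}\le\|\bar x\|_{\ell^1}\le\liminf\|x^{(k)}\|_{\ell^1}\le\limsup\|x^{(k)}\|_{\ell^1}\le\|x^\dagger\|_{\ell^1}$, and finally the upgrade to norm convergence --- is the standard and correct route. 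One small repair: the fact you label (c) is best justified by the Riesz--Scheff\'e argument (weak$^*$ convergence gives componentwise convergence, and componentwise convergence plus convergence of the $\ell^1$-norms gives norm convergence); the Schur property as such concerns weakly null sequences, not weak$^*$-null ones, so citing it here is slightly off, though the fact itself is true.

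The genuine gap is the sparsity claim in (i). Your tail-truncation argument does not go through: setting the components of a minimizer $\bar x$ beyond index $N$ to zero decreases the penalty by $\alpha\,\varepsilon_N$ with $\varepsilon_N:=\sum_{k>N}|\bar x_k|\to 0$ --- this is \emph{not} a uniformly positive amount --- while the residual term $\|A\cdot-y^\delta\|_Y^p$ may increase by up to roughly $p\,\|A\bar x-y^\delta\|_Y^{p-1}\,\|A\|\,\varepsilon_N$, which need not be smaller than $\alpha\,\varepsilon_N$ unless the residual happens to be small relative to $\alpha$. Tellingly, your sketch nowhere uses the weak$^*$-to-weak continuity hypothesis, and sparsity is not a hypothesis-free consequence of the $\ell^1$ penalty. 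The correct argument runs through the first-order optimality condition, exactly as in the paper's proof of Theorem~\ref{thm:l1failure}: a minimizer $x_\alpha^\delta$ of \eqref{eq:Tikl1} satisfies $A^*\eta\in\partial\|\cdot\|_{\ell^1}(x_\alpha^\delta)$ for a suitable $\eta\in Y^*$, so every index $k$ in the support must obey $|[A^*\eta]_k|=1$; since $e^{(k)}\rightharpoonup^*0$ in $\ell^1$, weak$^*$-to-weak continuity gives $[A^*\eta]_k=\langle\eta,A\,e^{(k)}\rangle\to 0$, whence only finitely many indices can lie in the support. You should replace the perturbation argument by this subdifferential argument (or equivalently by the observation that weak$^*$-to-weak continuity forces $A^*(Y^*)\subseteq c_0$).
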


The following theorem reflects the weak$^*$-to-weak continuity assumption of Proposition~\ref{pro:exist} in light of the type-classification from Definition~\ref{def:new}.

\begin{theorem} \label{thm:newl1}
If the bounded linear operator $A: \ell^1 \to Y$ has infinite-dimensional range and is weak$^*$-to-weak continuous, then the associated operator equation \eqref{eq:opeq} is ill-posed of type~II
in the sense of Definition~\ref{def:new}.
\end{theorem}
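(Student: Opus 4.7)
My plan is to prove the slightly stronger statement that, under the hypotheses, $\mathcal R(A)$ contains no infinite-dimensional closed subspace of $Y$. This delivers type~II in one stroke: if $A$ had closed infinite-dimensional range then $\mathcal R(A)$ itself would be such a subspace, contradicting the stronger claim; so $\mathcal R(A)$ is non-closed and hence $A$ is ill-posed, while the non-containment is exactly the type~II criterion of Definition~\ref{def:new}.

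The setup. Weak$^*$-to-weak continuity is equivalent to $A^*(Y^*)\subseteq c_0$, so $A(B_{\ell^1})$ -- the weak$^*$-to-weak continuous image of the weak$^*$-compact $B_{\ell^1}$ -- is weakly compact in $Y$. Suppose for contradiction that $M\subseteq\mathcal R(A)$ is closed and infinite-dimensional. Since $M$ is weakly closed, $A^{-1}(M)$ is weak$^*$-closed in $\ell^1$, and the bipolar theorem identifies it with $(c_0/W)^*$, where $W:=(A^{-1}(M))_\perp\subseteq c_0$. The co-restriction $\tilde A:A^{-1}(M)\to M$ is a weak$^*$-to-weak continuous bounded surjection; the open mapping theorem gives $rB_M\subseteq\tilde A(B_{A^{-1}(M)})\subseteq A(B_{\ell^1})$, so $B_M$ is a weakly closed subset of a weakly compact set, hence weakly compact, and $M$ is reflexive by Kakutani.

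Dualising. Because $\tilde A$ is weak$^*$-to-weak continuous from the dual space $(c_0/W)^*$ into $M$, its adjoint lands in the canonical copy of $c_0/W$ inside $(c_0/W)^{**}$, yielding a pre-adjoint $S:M^*\to c_0/W$ with $\tilde A=S^*$. Dualised open mapping for $\tilde A$ forces $\|S\phi\|\ge r\|\phi\|$, so $S$ isomorphically embeds the reflexive infinite-dimensional $M^*$ into the quotient $c_0/W$ of $c_0$.

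Everything then reduces to the lemma that no quotient of $c_0$ contains an infinite-dimensional reflexive subspace. My approach: in a hypothetical reflexive $V\subseteq c_0/W$ of infinite dimension, pick a weakly null normalized basic sequence $(w_n)\subset V$, lift to a bounded $(v_n)\subset c_0$, and extract a weakly Cauchy subsequence via Rosenthal's $\ell^1$-theorem (applicable because $c_0\not\supseteq\ell^1$). The differences $d_n:=v_{n+1}-v_n$ are weakly null in $c_0$ and semi-normalised -- the lower bound $\|d_n\|\ge\delta>0$ comes from $(w_n)$ being normalized basic together with $\pi$ contractive. The sharpened Bessaga-Pe\l czy\'nski selection in $c_0$ then yields a further subsequence of $(d_n)$ equivalent to the $c_0$-unit basis; the quotient map restricted to its closed span lands in the reflexive $V$, and every bounded operator from $c_0$ into a reflexive space is compact (automatic weak compactness plus the Dunford-Pettis property of $c_0$ delivers complete continuity, which combined with Rosenthal's dichotomy in $c_0$ upgrades weakly Cauchy sequences to norm Cauchy images). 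Compactness forces $\pi(d_{n_k})=w_{n_k+1}-w_{n_k}$ to have a norm-convergent subsequence, contradicting $\|w_{n+1}-w_n\|\ge\delta$ together with the weak nullity of these differences. I anticipate the main obstacle to be this lemma, and within it the ``$c_0\to$reflexive is compact'' step, which requires carefully assembling the Dunford-Pettis property, Rosenthal's dichotomy in $c_0$, and the upgrade from weakly Cauchy to norm Cauchy.
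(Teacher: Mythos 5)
Your proof is correct, and it is genuinely different from the paper's in the sense that the paper gives no argument at all: it defers entirely to \cite[Theorem~10.5]{Flemmingbuch18}, adding only the remark that the cited proof uses nothing but the subspace condition. Your chain of reasoning --- weak*-to-weak continuity gives $A^*(Y^*)\subseteq c_0$ and hence weak compactness of $A(B_{\ell^1})$; the open mapping theorem then traps $rB_M$ inside this weakly compact set for any closed infinite-dimensional $M\subseteq\mathcal{R}(A)$, so $M$ is reflexive; dualizing the weak*-to-weak continuous surjection $W^\perp=(c_0/W)^*\to M$ produces an isomorphic embedding of the reflexive, infinite-dimensional $M^*$ into the quotient $c_0/W$; and no quotient of $c_0$ contains such a subspace --- is sound, and the opening reduction (non-containment plus infinite-dimensional range forces $\mathcal{R}(A)$ non-closed, hence ill-posedness, hence type~II) is handled correctly. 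Two remarks. First, the paper defines weak*-to-weak continuity \emph{sequentially}; your use of the topological version (needed for ``continuous image of the weak*-compact ball is weakly compact'' and for $A^*(Y^*)\subseteq c_0$) is legitimate because $c_0$ is separable, so $B_{\ell^1}$ is weak*-metrizable and the Krein--\v{S}mulian theorem upgrades sequential to genuine weak*-continuity of each functional $A^*y^*$; this deserves a sentence. Second, your hands-on proof of the key lemma (Rosenthal's $\ell^1$-theorem, the sharpened Bessaga--Pe{\l}czy\'nski selection in $c_0$, and the Dunford--Pettis argument that operators from $c_0$ into reflexive spaces are compact) checks out, but it can be replaced wholesale by the Johnson--Zippin theorem that every quotient of $c_0$ embeds isomorphically into $c_0$, combined with the fact that every infinite-dimensional closed subspace of $c_0$ contains a copy of $c_0$ and so cannot be reflexive; that substitution would shorten the argument considerably at the cost of a heavier citation.
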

\begin{proof}
See \cite[Theorem~10.5]{Flemmingbuch18} for a proof. There, the unmodified definition for type-I ill-posedness is used, that is, type I requires that the range of $A$ contains a closed infinite-dimensional subspace $M$ and that $\mathcal{N}(A)$ is complemented in the full pre-image $A^{-1}[M]$. But the proof solely relies on the first condition and does not use any arugment related to (un)complementedness of $\mathcal{N}(A)$, thus, fits our modified Definition~\ref{def:new}, too.
\end{proof}

Weak*-to-weak continuity is a sufficient but not a necessary condition for existence, stability, and convergence of Tikhonov minimizers. Especially for Mazur-type operators, which always have a closed range, there is some chance that Tikhonov regularized solutions exist, are stable, and converge although Mazur-type operators lack weak*-to-weak continuity. But the next theorem shows that Tikhonov regularization fails for Mazur-type operators.

\begin{theorem} \label{thm:l1failure}
Let $A:=B:\ell^1\rightarrow\ell^2$ be the Mazur-type operator defined in Example~\ref{ex:1} based on a sequence $(\zeta^{(k)})_{k\in\mathbb{N}}$.
For $y^\delta\in\ell^2$ and $\alpha\geq 0$ the $\ell^1$-Tikhonov functional \eqref{eq:Tikl1} has minimizers if and only if $y^\delta=\lambda\,\zeta^{(k)}$ for some $\lambda\in\mathbb{R}$ and some $k\in\mathbb{N}$. For such $y^\delta$ there is exactly one minimizer if $\zeta^{(l)}\neq-\zeta^{(k)}$ for all $l\in\mathbb{N}$, $l\neq k$. The minimizer attains the form
\begin{equation}
x_\alpha^\delta=\begin{cases}
0,&\text{if }\lambda\in[-\alpha,\alpha],\\
(\lambda-\alpha)\,e^{(k)},&\text{if }\lambda>\alpha,\\
(\lambda+\alpha)\,e^{(k)},&\text{if }\lambda<-\alpha.
\end{cases}
\end{equation}
If $\zeta^{(k)}=-\zeta^{(l)}$ for some $l$, then for each
\begin{equation}
\gamma\in\begin{cases}
(\alpha-\lambda,0),&\text{if }\lambda>\alpha,\\
(0,-\alpha-\lambda),&\text{if }\lambda<-\alpha
\end{cases}
\end{equation}
there is an additional minimizer of the form
\begin{equation}
x_\alpha^\delta=\begin{cases}
(\gamma+\lambda-\alpha)\,e^{(k)}+\gamma\,e^{(l)},&\text{if }\lambda>\alpha,\\
(\gamma+\lambda+\alpha)\,e^{(k)}+\gamma\,e^{(l)},&\text{if }\lambda<-\alpha.
\end{cases}
\end{equation}
Here $e^{(k)}$ denotes the sequence $(0,\ldots,0,1,0,\ldots)$ with $1$ at position $k$.
\end{theorem}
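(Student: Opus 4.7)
The plan is to reduce the infinite-dimensional minimization of $T_\alpha^\delta$ to an essentially one-dimensional scalar Tikhonov problem by exploiting the unit-norm property of the $\zeta^{(k)}$ together with the strict convexity of $\ell^2$. The main work is to show that \emph{any} minimizer, should one exist, must be supported on at most two coordinates determined by $y^\delta$; once this is established, the explicit formulas follow from the standard soft-thresholding calculation.

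First I would derive the two basic inequalities
\[
\|Bx\|_{\ell^2}\;\leq\;\|x\|_{\ell^1}\qquad\text{and}\qquad\|Bx-y^\delta\|_{\ell^2}\;\geq\;\bigl|\|y^\delta\|_{\ell^2}-\|Bx\|_{\ell^2}\bigr|,
\]
the first from $\|\zeta^{(k)}\|_{\ell^2}=1$ applied to the $\ell^2$-convergent series $Bx=\sum_k x_k\zeta^{(k)}$ and the second being the reverse triangle inequality. Setting $s:=\|Bx\|_{\ell^2}$ and $t:=\|x\|_{\ell^1}$, we obtain $T_\alpha^\delta(x)\geq(\|y^\delta\|_{\ell^2}-s)^p+\alpha t$ for $0\leq s\leq t$, and optimization in $s$ and $t$ yields a sharp lower bound $\Psi(\|y^\delta\|_{\ell^2})$ that depends only on $\|y^\delta\|_{\ell^2}$. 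Density of $(\zeta^{(k)})_{k\in\mathbb{N}}$ in the unit sphere of $\ell^2$ shows that this lower bound is approached by sequences $x^{(n)}=s^\ast e^{(k_n)}$ with $\zeta^{(k_n)}\to y^\delta/\|y^\delta\|_{\ell^2}$, so that $\inf T_\alpha^\delta=\Psi(\|y^\delta\|_{\ell^2})$. Consequently, every minimizer must produce equality in both displayed inequalities simultaneously.

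Second, and this is the heart of the proof, I would characterize equality. Equality in the reverse triangle inequality inside the Hilbert space $\ell^2$ forces $Bx=c\,y^\delta$ for some $c\geq 0$. Equality $\|Bx\|_{\ell^2}=\|x\|_{\ell^1}$, by strict convexity of the $\ell^2$-norm, forces all nonzero summands $x_k\,\zeta^{(k)}$ to be nonnegative scalar multiples of one common unit vector $u\in\ell^2$; pairwise distinctness of the $\zeta^{(k)}$ then confines the support of $x$ to the set $\{k:\zeta^{(k)}=u\}\cup\{k:\zeta^{(k)}=-u\}$, which contains at most two indices. Matching $Bx=c\,y^\delta$ with $Bx$ being a real multiple of $u$ forces $u=\pm y^\delta/\|y^\delta\|_{\ell^2}$, so that $y^\delta=\lambda\,\zeta^{(k)}$ for some $k\in\mathbb{N}$ and some $\lambda\in\mathbb{R}$. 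This proves the ``only if'' direction and simultaneously localizes every possible minimizer in the one- or two-dimensional subspace spanned by $e^{(k)}$ (and possibly $e^{(l)}$ with $\zeta^{(l)}=-\zeta^{(k)}$).

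Third, under $y^\delta=\lambda\,\zeta^{(k)}$ I would restrict $T_\alpha^\delta$ to this subspace. On the one-dimensional slice $\{t\,e^{(k)}:t\in\mathbb{R}\}$ the functional reduces to the scalar Tikhonov problem whose unique minimizer is the soft-thresholding formula stated in the theorem. In the special situation $\zeta^{(l)}=-\zeta^{(k)}$ for some $l\neq k$, the two-dimensional slice $\{t\,e^{(k)}+s\,e^{(l)}\}$ yields $Bx=(t-s)\,\zeta^{(k)}$ and, provided $t\geq 0$ and $s\leq 0$, $\|x\|_{\ell^1}=t-s=|t-s|$; keeping $t-s$ equal to the soft-thresholded value preserves both residual and penalty, and the reparametrization $\gamma:=s$ produces exactly the additional family claimed, with the stated range of $\gamma$ coming from the sign constraints on $t$ and $s$. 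The main obstacle is the equality analysis in step two, where one must rule out cancellation phenomena among arbitrarily many nonzero coefficients via strict convexity of $\ell^2$ and then leverage distinctness of the $\zeta^{(k)}$ to cut the support down to at most two indices; everything else is bookkeeping around a well-understood one-dimensional optimization.
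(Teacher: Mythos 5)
Your argument is correct in its essentials but takes a genuinely different route from the paper. The paper works dually, via the first-order optimality condition $-\tfrac{1}{\alpha}A^\ast(Ax_\alpha^\delta-y^\delta)\in\partial\|\cdot\|_{\ell^1}(x_\alpha^\delta)$: it computes $A^\ast\eta=(\langle\eta,\zeta^{(k)}\rangle)_{k\in\mathbb{N}}$ and uses density of $(\zeta^{(k)})_{k\in\mathbb{N}}$ in the unit sphere to show that a subgradient certificate with $\|A^\ast\eta\|_{\ell^\infty}\le 1$ cannot tolerate two active indices unless the corresponding $\zeta$'s are antipodal, after which the explicit formulas come from comparing the values of the one-component candidates. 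You argue primally: from $\|Bx\|_{\ell^2}\le\|x\|_{\ell^1}$ and the reverse triangle inequality you obtain a lower bound $\Psi(\|y^\delta\|_{\ell^2})$ that density shows to be the exact infimum, and the equality cases of the triangle inequality in the strictly convex space $\ell^2$ then confine the support of any minimizer to at most two antipodal indices and force $y^\delta$ to be a multiple of some $\zeta^{(k)}$. Both routes are sound. Yours buys a clean separation between the value of the infimum (equal to $\alpha\|y^\delta\|_{\ell^2}-\tfrac{1}{2}\alpha^2$ for every $y^\delta$ with $\|y^\delta\|_{\ell^2}>\alpha$, which makes the remark after the theorem about approximation from a dense set transparent) and its attainment, and your support analysis does not depend on the exponent $p$; the paper's subdifferential route, on the other hand, delivers the verification of the inactive constraints $|\langle\beta_m\zeta^{(m)}-y^\delta,\zeta^{(l)}\rangle|\le\alpha$ essentially for free. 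Two caveats, both shared with the paper's own proof: the stated formulas correspond to $p=2$ with the normalization $\tfrac{1}{2}\|\cdot\|^2$ rather than the general $p>1$ of \eqref{eq:Tikl1}; and in the regime $\|y^\delta\|_{\ell^2}\le\alpha$ your optimal $s^\ast$ is $0$, so your own lower bound is attained at $x=0$ for \emph{every} $y^\delta$, meaning the ``only if'' part of the existence claim really pertains to $\|y^\delta\|_{\ell^2}>\alpha$. Your scheme actually makes this boundary case visible, so you should state the restriction $s^\ast>0$ explicitly before running the equality analysis instead of letting it be assumed silently.
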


\begin{proof}
From
\begin{equation}
\langle\eta,A\,x\rangle=\sum_{k\in\mathbb{N}}x_k\,\langle\eta,\zeta^{(k)}\rangle
\end{equation}
for $\eta\in\ell^2$ and $x\in\ell^1$ we see
\begin{equation}
A^\ast\eta=(\langle\eta,\zeta^{(1)}\rangle,\langle\eta,\zeta^{(2)}\rangle,\ldots)\in\ell^\infty.
\end{equation}
Due to convexity and continuity of the Tikhonov functional, $x_\alpha^\delta\in\ell^1$ is a minimizer of $T_\alpha^\delta$ if and only if
\begin{equation}
-\tfrac{1}{\alpha}\,A^\ast\,(A\,x_\alpha^\delta-y^\delta)\in\partial\|\cdot\|_{\ell^1}(x_\alpha^\delta),
\end{equation}
where
\begin{equation}
\xi\in\partial\|\cdot\|_{\ell^1}(x)
\quad\Leftrightarrow\quad
\xi_k\begin{cases}
=-1,&\text{if }x_k<0,\\
\in[-1,1],&\text{if }x_k=0,\\
=1,&\text{if }x_k>0
\end{cases}
\;\text{for }k\in\mathbb{N}.
\end{equation}
\par
 The optimality condition implies that there is some $\eta\in\ell^2$ such that $A^\ast\eta\in\partial\|\cdot\|_{\ell^1}(x_\alpha^\delta)$. Assume that $x_\alpha^\delta$ has at least two non-zero components $[x_\alpha^\delta]_m\neq 0$ and $[x_\alpha^\delta]_n\neq 0$. Denote the signs of both components by $s_m\in\{-1,1\}$ and $s_n\in\{-1,1\}$, respectively. Then $[A^\ast\,\eta]_m=s_m$ and $[A^\ast\,\eta]_n=s_n$ or, equivalently, $\langle\eta,\zeta^{(m)}\rangle=s_m$ and $\langle\eta,\zeta^{(n)}\rangle=s_n$. Now take a subsequence $(\zeta^{(k_l)})_{l\in\mathbb{N}}$ of $(\zeta^{(k)})_{k\in\mathbb{N}}$ converging to
\begin{equation}
\tilde{\eta}:=\frac{s_m\,\zeta^{(m)}+s_n\,\zeta^{(n)}}{\|s_m\,\zeta^{(m)}+s_n\,\zeta^{(n)}\|_{\ell^2}}.
\end{equation}
Then $\langle\eta,\zeta^{(k_l)}\rangle\to\langle\eta,\tilde{\eta}\rangle$. From
\begin{equation}
\langle\eta,\tilde{\eta}\rangle=\frac{2}{\|s_m\,\zeta^{(m)}+s_n\,\zeta^{(n)}\|_{\ell^2}}.
\end{equation}
we see $\langle\eta,\tilde{\eta}\rangle\geq 1$ and that $\langle\eta,\tilde{\eta}\rangle>1$ holds if and only if $\zeta^{(m)}$ and $\zeta^{(n)}$ are linearly dependent. Thus, $\langle\eta,\tilde{\eta}\rangle=1$ is only possible for $\zeta^{(n)}=-\zeta^{(m)}$.
\par
In case $\langle\eta,\tilde{\eta}\rangle>1$ we find (large enough) $k_l$ with $\langle\eta,\zeta^{(k_l)}\rangle>1$ or, equivalently $[A^\ast\,\eta]_{k_l}>1$. Thus, $A^\ast\,\eta\notin\partial\|\cdot\|_{\ell^1}(x_\alpha^\delta)$, which shows that $x_\alpha^\delta$ can have at most one non-zero component $[x_\alpha^\delta]_k$ if $\zeta^{(l)}\neq-\zeta^{(k)}$ for all $l\neq k$.
\par
In case $\langle\eta,\tilde{\eta}\rangle=1$, we do not obtain a contradiction (at the moment). That is, $x_\alpha^\delta$ may have two non-zero components $[x_\alpha^\delta]_m$ and $[x_\alpha^\delta]_n$ as long as $\zeta^{(n)}=-\zeta^{(m)}$. But a third non-zero component $[x_\alpha^\delta]_l$ is not possible, because corresponding $\zeta^{(l)}$ would have to be equal to both $-\zeta^{(m)}$ and $-\zeta^{(n)}=\zeta^{(m)}$.
\par
Let $x_\alpha^\delta=\beta_m\,e^{(m)}$ with $\beta_m\in\mathbb{R}\setminus\{0\}$ be a minimizer with only one non-zero component. Then the optimality condition is equivalent to
\begin{equation}\label{eq:opt_beta}
\langle\beta_m\,\zeta^{(m)}-y^\delta,\zeta^{(m)}\rangle=-(\mathrm{sgn}\,\beta_m)\,\alpha
\quad\text{and}\quad
|\langle\beta_m\,\zeta^{(m)}-y^\delta,\zeta^{(l)}\rangle|\leq\alpha\;\text{for }l\neq m.
\end{equation}
The left-hand condition is equivalent to $\beta_m=\langle y^\delta,\zeta^{(m)}\rangle-(\mathrm{sgn}\,\beta_m)\,\alpha$ and, thus, to
\begin{equation}
\beta_m=\begin{cases}
\langle y^\delta,\zeta^{(m)}\rangle-\alpha,&\text{if }\langle y^\delta,\zeta^{(m)}\rangle>\alpha,\\
\langle y^\delta,\zeta^{(m)}\rangle+\alpha,&\text{if }\langle y^\delta,\zeta^{(m)}\rangle<-\alpha.
\end{cases}
\end{equation}
This shows that only
\begin{equation}
x_\alpha^\delta=\begin{cases}
0,&\text{if }\langle y^\delta,\zeta^{(m)}\rangle\in[-\alpha,\alpha],\\
(\langle y^\delta,\zeta^{(m)}\rangle-\alpha)\,e^{(m)},&\text{if }\langle y^\delta,\zeta^{(m)}\rangle>\alpha,\\
(\langle y^\delta,\zeta^{(m)}\rangle+\alpha)\,e^{(m)},&\text{if }\langle y^\delta,\zeta^{(m)}\rangle<-\alpha,
\end{cases}
\end{equation}
for each $m\in\mathbb{N}$ are candidates for minimizers. The Tikhonov functional for these candidates is
\begin{align}
T_\alpha^\delta(x_\alpha^\delta)
&=\tfrac{1}{2}\,\|\beta_m\,\zeta^{(m)}-y^\delta\|_{\ell^2}^2+\alpha\,|\beta_m|\\
&=\tfrac{1}{2}\,\beta_m^2+\tfrac{1}{2}\,\|y^\delta\|_{\ell^2}^2-\beta_m\,\langle y^\delta,\zeta^{(m)}\rangle+\alpha\,|\beta_m|\\
&=\tfrac{1}{2}\,\beta_m^2+\tfrac{1}{2}\,\|y^\delta\|_{\ell^2}^2-\beta_m\,\bigl(\langle y^\delta,\zeta^{(m)}\rangle-\alpha\,\mathrm{sgn}\,\beta_m\bigr)\\
&=\tfrac{1}{2}\,\beta_m^2+\tfrac{1}{2}\,\|y^\delta\|_{\ell^2}^2-\beta_m^2\\
&=-\tfrac{1}{2}\,\beta_m^2+\tfrac{1}{2}\,\|y^\delta\|_{\ell^2}^2.
\end{align}
The candidate with greatest $\beta_m^2$ is the true minimizer (which exists by assumption). From the definition of $\beta_m$ we see that $\beta_m^2$ is maximal (w.\,r.\,t.\ $m$) if $|\langle y,\zeta^{(m)}\rangle|$ is maximal. In particular, the existence of a minimizer implies that there is some $m$ with $|\langle y^\delta,\zeta^{(m)}\rangle|\geq|\langle y^\delta,\zeta^{(\tilde{m})}\rangle|$ for all $\tilde{m}\in\mathbb{N}$. If $(\zeta^{(k_l)})_{k_l\in\mathbb{N}}$ is a sequence converging to $\frac{y^\delta}{\|y^\delta\|_{\ell^2}}$, then we have
\begin{equation}
|\langle y^\delta,\zeta^{(m)}\rangle|\geq|\langle y^\delta,\zeta^{(k_l)}\rangle|\to\|y^\delta\|_{\ell^2}.
\end{equation}
Thus, $|\langle y^\delta,\zeta^{(m)}\rangle|=\|y^\delta\|_{\ell^2}$, which implies $y^\delta=\lambda\,\zeta^{(m)}$ for some $\lambda\in\mathbb{R}$. Note that $|\beta_m-\langle y^\delta,\zeta^{(m)}\rangle|=\alpha$ and $|\langle\zeta^{(m)},\zeta^{(l)}\rangle|\leq 1$ for all $l$, which implies the second condition in \eqref{eq:opt_beta}.
\par
It remains to discuss the case of two non-zero compentents in $x_\alpha^\delta$. Let $x_\alpha^\delta=\beta_m\,e^{(m)}+\gamma_m\,e^{(n)}$ be such a minimizer. Remember $\zeta^{(n)}=-\zeta^{(m)}$ as well as $\mathrm{sgn}\,\beta_m\neq\mathrm{sgn}\,\gamma_m$. The former results in
\begin{equation}
A\,x_\alpha^\delta=(\beta_m-\gamma_m)\,\zeta^{(m)},
\end{equation}
the latter in
\begin{equation}
\|x_\alpha^\delta\|_{\ell^1}=(\mathrm{sgn}\,\beta_m)\,(\beta_m-\gamma_m).
\end{equation}
In full analogy to the one-component case one obtains a family of candidates parametrized by $m$ and $\gamma$. Looking at the Tikhonov functional for those candidates one sees that the functional's value does not depend on $\gamma$ but only on $m$ in the same way as for the one-component case. Analogously to the one-component case one obtains the structure of $y^\delta$ allowing for minimizers with two non-zero components.
\end{proof}

Theorem~\ref{thm:l1failure}  shows that for Mazur-type operators Tikhonov regularized solutions only exist for $y^\delta$ from a dense subset of the data space. In principle, one could approximate each $y^\delta\in\ell^2$ by a sequence $y^{(k)}$ with corresponding Tikhonov minimizers $x^{(k)}$. If this sequence converges (at least weakly*), one could consider its limit as a regularized solution for $y^\delta$. But from the structure of the minimizers provided by the proposition above one easily sees that the sequence of minimizers always converges weakly* to $0$ (and it is not norm-Cauchy).

The authors conjecture that in $\ell^1$ there is an intimate connection between both lacking weak*-to-weak continuity and uncomplemented null-spaces, driven by the fact that they were unable to find weak*-to-weak continuous operators with uncomplemented null-space. Both classes of operators with uncomplemented null-space, Mazur-type operators as well as quotient maps, are not weak*-to-weak continuous by Theorem~\ref{thm:newl1}. The following theorem shows that even compositions with Mazur-type operators (cf.\ Examples~\ref{ex:2} and~\ref{ex:3}) always lack weak*-to-weak continuity.

\begin{theorem} \label{thm:Bcomp}
Let $A_{\mathrm{Maz}}:\ell^1\rightarrow Y$ be of Mazur-type with $Y$ some separable Banach space (cf.\ Example~\ref{ex:1}) and let $C:Y\rightarrow Z$ with $C\neq 0$ be some bounded linear operator mapping into a Banach space $Z$. Then $A:=C\circ A_{\mathrm{Maz}}$ is not weak*-to-weak continuous.
\end{theorem}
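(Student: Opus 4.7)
The plan is to test the (assumed) weak*-to-weak continuity of $A=C\circ A_{\mathrm{Maz}}$ against the canonical unit vectors $e^{(k)}\in\ell^1$. First I would recall that $\ell^1$ is the dual of $c_0$, so for any $(c_j)_{j\in\mathbb{N}}\in c_0$ we have $\langle e^{(k)},(c_j)\rangle=c_k\to 0$ as $k\to\infty$, which means $e^{(k)}\stackrel{\ast}{\rightharpoonup}0$ in $\ell^1$. If $A$ were weak*-to-weak continuous, then $Ae^{(k)}=C\zeta^{(k)}$ would converge weakly to $0$ in $Z$, where $(\zeta^{(k)})_{k\in\mathbb{N}}\subset S_Y$ is the dense sequence in the unit sphere of $Y$ used to define $A_{\mathrm{Maz}}$ via \eqref{eq:mazur}.

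Next, I would exploit precisely the density built into the Mazur-type construction. Fix any $y\in S_Y$. By density of $(\zeta^{(k)})$ in $S_Y$, there exists a subsequence with $\zeta^{(k_j)}\to y$ in norm, hence $C\zeta^{(k_j)}\to Cy$ in norm, and therefore also weakly, in $Z$. Since a weakly convergent sequence has all its subsequences converging weakly to the same limit, the full sequence $C\zeta^{(k)}\rightharpoonup 0$ forces $Cy=0$. As $y\in S_Y$ was arbitrary, homogeneity of $C$ yields $C=0$, contradicting the hypothesis $C\neq 0$. Consequently $A$ cannot be weak*-to-weak continuous.

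I do not expect a real technical obstacle here: the argument rests on the single observation that $A_{\mathrm{Maz}}$ maps the weak*-null sequence $(e^{(k)})$ onto a norm-dense subset of $S_Y$, so that any weak*-to-weak continuous post-composition with $C$ would be squeezed into annihilating the entire unit sphere. The only point to handle with a line of care is the identification of the weak*-topology on $\ell^1$ via the predual $c_0$, which is already the convention fixed in the paper right before Proposition~\ref{pro:exist}.
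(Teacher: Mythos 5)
Your proposal is correct and follows essentially the same route as the paper: both test weak*-to-weak continuity against the weak*-null sequence $(e^{(k)})$, observe $A e^{(k)}=C\zeta^{(k)}$, and exploit the norm-density of $(\zeta^{(k)})$ in the unit sphere of $Y$. The only cosmetic difference is that the paper fixes a functional $\eta\in Z^*$ with $C^*\eta\neq 0$ and exhibits a single non-vanishing limit, whereas you let $y$ range over the whole sphere and invoke uniqueness of weak limits to force $C=0$; both steps are equally valid.
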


\begin{proof}
For $n\in\mathbb{N}$ consider $e^{(n)}$ with $1$ at position $n$ and zeros else. The sequence $(e^{(n)})_{n\in\mathbb{N}}$ converges weakly* to zero in $\ell^1$. We show that there is some $\eta\in Z^\ast$ such that the dual product $\langle\eta,A\,e^{(n)}\rangle_{Z^*,Z}$ does not tend to zero as $n \to \infty$, which would mean that $(A\,e^{(n)})_{n\in\mathbb{N}}$ does not converge weakly to zero in $Z$ and would prove the theorem. Now, for arbitrary $\eta\in Z^\ast$, we have
\begin{equation}
\langle\eta,A\,e^{(n)}\rangle_{Z^*,Z}
=\sum_{k=1}^\infty e^{(n)}_k\,\langle\eta,C\,\zeta^{(k)}\rangle_{Z^*,Z}
=\langle\eta,C\,\zeta^{(n)}\rangle_{Z^*,Z}
=\langle C^\ast\,\eta,\zeta^{(n)}\rangle_{Y^*,Y}.
\end{equation}
Take some $\eta$ with $C^\ast\,\eta\neq 0$ and some $y\in Y$ with $\langle C^\ast\,\eta,y\rangle_{Y^*,Y}\neq 0$ and $\|y\|_Y=1$. Then we find a sequence $(k_n)_{n\in\mathbb{N}}$ such that there is norm convergence $\zeta^{(k_n)}\to y$ in $Y$ as $n \to \infty$. Thus
\begin{equation}
\langle C^\ast\,\eta,\zeta^{(k_n)}\rangle_{Y^*,Y}\to\langle C^\ast\,\eta,y\rangle_{Y^*,Y}\neq 0,
\end{equation}
and the proof is complete.
\end{proof}

Weak*-to-weak continuity would imply weak* closedness of the null-space, which in non-reflexive Banach spaces is a stronger property than norm or weak closedness (note that a subspace is norm closed if and only if it is weakly closed). Does weak* closedness of the null-space imply its complementedness in $\ell^1$?
In reflexive Banach spaces weak* and weak convergence coincide and all bounded linear operators are weak*-to-weak continuous. Nethertheless, complemented as well as uncomplemented null-spaces may occur and there is no relationship between complementedness of null-spaces and weak*-to-weak continuity.
In $\ell^\infty$ it is known that uncomplemented null-spaces may occur even for weak*-to-weak continuous operators, see \cite[Theorem~3.1]{Tan18}.
In $\ell^1$ the question whether weak*-to-weak continuity implies complementedness of the null-space remains open. At least the converse is not true.
The following example shows that there are ill-posed operators with complemented null-space which are not weak*-to-weak continuous.

\begin{example}[Not weak*-to-weak continuous injective operator]
Let $A:\ell^1\rightarrow\ell^2$ be defined by
\begin{equation}
A\,x:=\left(\sum_{l=1}^\infty x_l,\,\tfrac{1}{2}\,x_2,\,\ldots,\,\tfrac{1}{k}\,x_k,\,\ldots\right).
\end{equation}
Then $A$ is injective with inverse $A^{-1}:\mathcal{R}(A)\rightarrow\ell^1$ given by
\begin{equation}
A^{-1}\,y:=\left(y_1-\sum_{l=2}^\infty l\,y_l,\,2\,y_2,\,\ldots,\,k\,y_k,\,\ldots\right).
\end{equation}
The inverse is unbounded, because $\|A^{-1}\,e^{(k)}\|_{\ell^1}=2\,k\to\infty$.
The mapping $A$ is not weak*-to-weak continuous, because $A\,e^{(k)}=(1,0,\ldots,0,\frac{1}{k},0,\ldots)$ converges weakly to $e^{(1)}$ and not to $0$.
\end{example}

\end{document}